\newtheorem{theorem}{Theorem}
\newtheorem{proposition}{Proposition}
\newtheorem{definition}[theorem]{Definition}
\newtheorem{algorithm}{Algorithm}
\begin{document}
\title{A pattern search bound constrained optimization method with a nonmonotone line search strategy}

\author{Johanna A. Frau and Elvio A. Pilotta \thanks{Facultad de Matem\'atica, Astronom\'{\i}a y F\'{\i}sica, Universidad Nacional de C\'ordoba, CIEM (CONICET), Medina Allende s/n, Ciudad Universitaria (5000) C\'ordoba, Argentina. E-mail: \{jfrau, pilotta\}@famaf.unc.edu.ar}}

\maketitle

\begin{abstract}
A new pattern search method for bound constrained optimization is introduced. The proposed algorithm employs the coordinate directions, in a suitable way, with a nonmonotone line search for accepting the new iterate, without using derivatives of the objective function. The main global convergence results are strongly based on the relationship between the step length and a stationarity measure.
Several numerical experiments are performed using a well known set of test problems. Other line search strategies were tested and compared with the new algorithm.

\

{\bf Key words.} pattern search methods, bound constrained optimization, global convergence, numerical experiments.

\

{\bf AMS Subject classifications.} 90C30, 90C56, 65K05.
\end{abstract}
\hrulefill

\section{Introduction}\label{intro}

In this paper, we propose a new algorithm to solve bound constrained optimization problems where the derivatives of the objective function are not available. So, the problem of interest is

\begin{equation}\label{problem}
\begin{array}{lc }
\mbox{minimize} & f(x) \\
\mbox{subject to} & x \in \Omega 
\end{array}
\end{equation}
where $f: \mathbbm{R}^n \rightarrow \mathbbm{R}$  and $\Omega = \{x \in \mathbbm{R}^n \, | \, l \leq x \leq u\}$ with $-\infty \leq l < u \leq \infty$. We assume that the objective function is continuosly differentiable on $\Omega$ but the derivative information is unrealiable or non-existent. This situation frequently appears in many real world applications where functional values $f(x)$ require complex simulations or the function contains noise. This kind of situations may arise in applications from molecular geometry \cite{ANRV04,MM94}, medical image registration \cite{OB07}, shape and design optimization \cite{BDFST98,DV04,MWDM04}. There are many problems where the functional values come from practical experiments, so  the explicit formulation of the objective function is not available and Quasi-Newton or finite difference methods are not applicable. Derivative-free optimization has received considerable attention from the optimization community during the last years, including the establishment of solid mathematical foundations for many of the methods considered in practice. Particularly, pattern search methods have succeeded where more elaborate approaches fail. These methods belong to the family of direct search methods, characterized by unsophisticated implementations, the abscence of the construction of a model of the objective function and the use of pattern matrices to explore directions around the current iterate. See \cite{AD03,CSV09,HJ61,KLT03,T97}.

\

Pattern search methods were initially introduced by Hooke and Jeeves \cite{HJ61} for unconstrained optimization problems and lately analized and formally presented by Torczon et. al.  \cite{KLT03,T97}. More recently, some strategies were adapted from derivative-based methods  and incorporated to pattern search methods. For instance, in \cite{DMR08} the authors introduced a global strategy, based in the ideas developed in {\cite{GLL86,LMR06,LF00,LS02}, that uses a nonmonotone line search scheme in a pattern search algorithm. 

\

In \cite{LT99} Lewis and Torczon extended the pattern search method for the bound constrained case although they did not perform numerical experiments. This problem was also studied in \cite{AEP11} using polynomial interpolation and trust region strategies, which is a quite different approach to pattern search methods.

\

In this article, we propose a pattern search method that includes a nonmonotone line search as globalization strategy for a bound constrained optimization problem (\ref{problem}). The new algorithm is based on the ideas introduced in \cite{DMR08} for the unconstrained case, however the proofs of the main convergence results of our method use a completely different philosophy . To prove the global convergence, we use the stationarity measure $\chi(x)$ defined in \cite{KLT03}. This measure takes into account the degree to which the directions of the steepest descent point outward  with respect to the portion of the feasible region near $x$. In order to validate our algorithm, we perform several numerical experiments and comparisons with other well established algorithm. After that, we extend our benchmark study by incorporating other line search strategies \cite{A66,C09,KLNS15,NS13,ZH04}. 

\

This paper is organized as follows: some definitions and preliminary results are given in Section 2. The new algorithm is introduced in Section 3. Convergence results are stated in Section 4. Numerical experiments are presented and analized in Section 5. Finally, some conclusions are given in Section 6.

\subsection*{Notation}
In this work, $e^{(i)}$ denotes the i-th canonical vector in $\mathbb{R}^{n}$ and  
$\|\cdot\|$ will be the Euclidean norm. Also, $int(\Omega)$ denotes the largest open set contained in $\Omega$.

\

\section{Definitions and preliminary results}

In this section, we present some definitions and results which are necessary in order to guarantee  convergence of the method that we propose to solve problem (\ref{problem}). 

\

The following two definitions are the basis of the theory of convergence and they are widely used in the context of optimization. The first definition refers to the cone $K$ generated by the set of all nonnegative linear combinations of vectors of a given set. The second definition includes those vectors that make an angle of 90$^\circ$ or more with each element of $K$, the polar cone ${K^\circ}$. 

\begin{definition}
Let $\textit{D}= \lbrace v^{(1)}, v^{(2)}, \ldots, v^{(r)} \rbrace$ be a set of $\textit{r}$ vectors in $\mathbb{R}^{n}$. The set $\textit{D}$ generates the cone $K$ if 
$$
K=\lbrace u | u= \sum_{i=1}^r c^{(i)} v^{(i)}, c^{(i)} \geq 0, \mbox{ for } i=1,2, \ldots, r \rbrace. 
$$
\end{definition}

\begin{definition}
The polar cone of a cone $K$, denoted by $K^\circ$, is the cone defined by
$$
K^\circ=\lbrace w| w^Tu \leq 0, \mbox{ for all } u \in K \rbrace.
$$
\end{definition}

When we are minimizing a function in a feasible region, we are particularly interested in choosing search directions (descent directions at best) which  improve  the objective function and remain feasible at the same time. Given $x \in \Omega$, we define $K(x, \epsilon)$ as the cone generated by 0 and the outward pointing normals of the constraints within a distance $\epsilon$ of $x$, namely
$$
\lbrace e^{(i)} | u^{(i)}-x^{(i)} \leq \epsilon \rbrace \cup \lbrace -e^{(i)} | x^{(i)}-l^{(i)} \leq \epsilon \rbrace .
$$
In other words, $K(x, \epsilon)$ is generated by the normals to the faces of the feasible region within distance $\epsilon$ of $x$. Observe that if $K(x, \epsilon)=\lbrace 0 \rbrace$, as in the unconstrained case or when $x$ is well within the interior of the feasible region, $K^\circ(x,\epsilon)= \mathbb{R}^{n}$. The cone $K(x, \epsilon)$ is important because, for suitable choices of $\epsilon$, its polar  $K^\circ(x,\epsilon)$ approximates the feasible region near $x$. So, if  $K^\circ(x,\epsilon)\neq \lbrace 0 \rbrace$, the search can proceed from $x$ along all directions in $K^\circ(x,\epsilon)$ for at least a distance of $\epsilon$ and still remain inside the feasible region. See \cite{KLT03} for more details.

\

As in the theory of methods based explicitly on derivatives, in derivative free optimization, we need a measure that lets us know how close a point $x$ is to constrained stationarity. In this article, we adopted the following  measure of stationarity
$$
\chi(x)= \max_{\substack {x +\omega \in \Omega, \\ \| \omega\| \leq 1}} -\nabla f(x)^T \omega. 
$$ 
Roughly speaking, $\chi(x)$ captures the degree to which the direction of steepest descent is outward pointing with respect to the portion of the feasible region near $x$. In \cite{CGT00}, the authors proved that if $\Omega$ is convex, the function $\chi$ has the following properties:
\begin{enumerate}
\item $\chi(x) \mbox{ is continuous. }$
\item $\chi(x)\geq 0$.
\item $\chi(x)=0 \mbox{ if and only if } x \mbox{ is a KKT point }$for the problem (\ref{problem}).
\end{enumerate} 

Thus, showing that $\chi(x_k) \rightarrow 0$ as $k \rightarrow \infty$ establishes a global first-order convergence result, which will be one  of our primary goals on this work.

\ 
\section{The bound constrained nonmonotone pattern search algorithm \texttt{nmps}}

We begin this section by introducing the proposed algorithm.

\

Let $M$ be a positive integer, which indicates how many previous functional values will be considered on the nonmonotone line search. Let $\Delta_{tol}> 0$ be the tolerance for the convergence criterion. Let $D_{\bigoplus}$ be a finite set of $\mathbbm{R}^n$ given by the coordinate directions, that is,
$$
D_{\bigoplus}= \lbrace \pm e^ {{(i)}}| i=1,2,\ldots,n \rbrace.
$$

\

Assume that $\lbrace \eta_k \rbrace$ is a sequence chosen such that $\eta_k>0$, for all $k=0,1,2,\ldots,$ and $ \sum_{k=0}^{\infty}\eta_{k}=\eta < \infty$ is a convergent series. 

\

Suppose that $x_0 \in \Omega$ is an initial approximation to the solution and let $\Delta_0 > 0$ be the initial value for the step length. Given $x_k \in \Omega$, $\Delta_k > \Delta_{tol}> 0$, the steps for computing $x_{k+1}$ are given by the following algorithm.

\begin{algorithm}[\texttt{nmps}] 
\end{algorithm}

\subsubsection*{{\bf Step 1}:} 
Compute $f(x_k)$ and define $f_{\max}(x_k)$ such that
$$
f_{\max}(x_k)=\max \lbrace f(x_k),\ldots,f(x_{k-{\min\lbrace k,M-1\rbrace}}) \rbrace = \max_{0 \leq j\leq m(k)} \lbrace f(x_{k-j}) \rbrace
$$
where $m(k)=\min \lbrace k,M-1 \rbrace$.

\subsubsection*{{\bf Step 2}: {\it Backtracking}}

\begin{itemize}
\item[2.1] Find (if possible) $d\in D_\oplus$ such that $(x_k + \Delta_k d) \in \Omega$ and the inequality
\begin{equation}\label{nmc}
f(x_k + \Delta_k d)\leq f_{\max}(x_k)+ \eta_k - \Delta_k^{2}
\end{equation}
holds. Set $\Delta_{k+1}\leftarrow 1$ and $x_{k+1}\leftarrow x_k+ \Delta_k d$.

\item[2.2] If there is no a direction $d\in D_\oplus$ such that $(x_k + \Delta_k d) \in \Omega$ and (\ref{nmc}) holds, then set 

$\Delta_k \leftarrow \displaystyle \frac{\Delta_k}{2}$ and repeat Step 2.1 until
a new $x_{k+1}$ is found.

If $\Delta_{k+1}<\Delta_{tol}$ terminate the execution of the algorithm.
\end{itemize}

\section{Theoretical results and convergence analysis}

In order to prove our main convergence result we need to demonstrate some auxiliary results. The following proposition concerns the nonmonotonicity of $\{f(x_k)\}$. It is analogous to Lemma 2.3 in \cite{BMR03}.

\begin{proposition} \label{propaux}
If $l(k)$ is an integer such that 
\begin{center}
$kM-m(kM)\leq l(k)\leq kM$, 
\end{center}
with $m(kM)=\min\lbrace kM,M-1\rbrace$, and
$$
\begin{array}{rcl}
f(x_{l(k)})& = & \stackbin[0 \leq j\leq m(kM)]{}{\max} \lbrace f(x_{kM-j}) \rbrace \\[3mm]
& = & \max \lbrace f(x_{kM}),f(x_{kM-1}),\ldots,f(x_{kM-{\min\lbrace kM,M-1\rbrace}}) \rbrace 
\end{array}
$$
then   
\begin{equation}\label{proposicion1}
f(x_{l(k+1)} \leq f(x_{l(k)}) + \eta_{kM} + \ldots + \eta_{kM-M+1} - \Delta_{l(k+1)-1}^{2}
\end{equation}
\end{proposition}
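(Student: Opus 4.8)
The plan is to mimic the classical Grippo--Lampariello--Lucidi argument as adapted in Lemma 2.3 of \cite{BMR03}, but carried out over blocks of $M$ consecutive iterates rather than single steps. First I would observe that, by the acceptance rule in Step 2.1, every iterate satisfies
$$
f(x_{k+1}) \leq f_{\max}(x_k) + \eta_k - \Delta_k^2,
$$
and since $f_{\max}(x_k) = \max_{0 \le j \le m(k)} f(x_{k-j})$, each of the last $M$ functional values up to index $k$ bounds $f(x_{k+1})$ from above (after adding $\eta_k - \Delta_k^2$). The index $l(k)$ picks out, within the window $\{kM-m(kM),\dots,kM\}$, the iterate with the largest functional value; so $f(x_{l(k)})$ dominates $f(x_j)$ for all $j$ in that window.

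The core step is to bound $f(x_{l(k+1)})$, where $l(k+1)$ lies in the window $\{(k+1)M-m((k+1)M),\dots,(k+1)M\}$. I would write $l(k+1)-1 = kM + r$ for some $r$ with $0 \le r \le M-1$ (assuming $k$ large enough that $m((k+1)M)=M-1$; the initial transient is handled separately), and then unroll the acceptance inequality $r+1$ times:
$$
f(x_{l(k+1)}) \leq f(x_{kM+r}) + \sum_{i=0}^{r-1}\bigl(\eta_{kM+i} - \Delta_{kM+i}^2\bigr) + \eta_{kM+r} - \Delta_{kM+r}^2 \wedge \dots
$$
Here at each unrolling step one replaces $f_{\max}(x_{kM+i})$ by the maximum over its window, and one checks that after at most $M-1$ such replacements the argument of the max falls back into the block $\{kM-m(kM),\dots,kM\}$, at which point it is dominated by $f(x_{l(k)})$. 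Discarding the (nonpositive) $-\Delta^2$ terms except the very last one $-\Delta_{l(k+1)-1}^2$, and bounding the sum of the $\eta$'s that appear by $\eta_{kM} + \eta_{kM-1} + \dots + \eta_{kM-M+1}$, yields exactly \eqref{proposicion1}.

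The main obstacle I anticipate is bookkeeping: making the "unroll until the index re-enters the previous block" argument precise, i.e. verifying that no more than $M$ distinct $\eta$-terms with indices in $\{kM-M+1,\dots,kM\}$ are ever needed, and that the chain of maxima indeed terminates inside the window defining $f(x_{l(k)})$ regardless of which direction was accepted at each step. A clean way to organize this is by a finite induction: show that for each $t=0,1,\dots,M-1$, $f(x_{kM+t+1}) \leq f(x_{l(k)}) + \sum_{i=kM-M+1}^{kM+t}\eta_i - \Delta_{kM+t}^2$, using at the induction step that $f_{\max}(x_{kM+t})$ is a maximum over at most $M$ preceding values, each of which is either in the block governed by $f(x_{l(k)})$ or has already been bounded by the inductive hypothesis. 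Taking $t = l(k+1)-1-kM$ then gives the claim. The boundary cases (small $k$, where $m(kM) < M-1$) require only trivial modifications since the windows are then shorter and the same inequalities hold a fortiori.
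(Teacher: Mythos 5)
Your proposal is correct and follows essentially the same route as the paper: a finite induction over $t$ within the block $\{kM+1,\dots,kM+M\}$, splitting $f_{\max}(x_{kM+t})$ into indices that fall back into the window defining $f(x_{l(k)})$ and indices already controlled by the inductive hypothesis, then concluding by noting $l(k+1)=kM+t$ for some $t\in\{1,\dots,M\}$. The only cosmetic difference is in how the block of $\eta$-indices is written, a point on which the paper's own statement and proof are themselves not entirely consistent.
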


\begin{proof}

From now on we are going to suppose that the iteration $k$ is such that $k \geq M $, therefore $m(kM)=\min \lbrace {kM,M-1} \rbrace= M-1$.

\

Then, by an inductive argument on $t=1,2, \ldots, M$ we will prove that
\begin{equation}\label{induccion}
f(x_{kM+t)} \leq f(x_{l(k)})+ \eta_{kM} + \ldots + \eta_{kM-M+1} - \Delta_{kM+t-1}^{2}
\end{equation}
holds for all iteration $k=1,2, \ldots$

\

In fact, from (\ref{nmc}), we have
$$
\begin{array}{rcl}
f(x_{kM+1}) & \leq & f_{\max}(x_{kM}) + \eta_{kM} - \Delta_{kM}^2 \\ 
& = & \stackbin[0 \leq j\leq M-1]{}{\max} \lbrace f(x_{kM-j}) \rbrace + \eta_{kM} - \Delta_{kM}^2 \\[2mm] 
& = & f(x_{l(k)})+ \eta_{kM} - \Delta_{kM}^2 \\ 
& < & f(x_{l(k)})+ \eta_{kM}, 
\end{array}
$$
for all $k \in \mathbbm{N}$. Therefore, the inequality (\ref{induccion}) holds for $t=1$.

\

Now, by inductive hypothesis, we suppose that 
$$
\begin{array}{rcl}
f(x_{kM+t^\prime)} & \leq & f(x_{l(k)})+ \eta_{kM} + \ldots + \eta_{kM+t^\prime-1} - \Delta_{kM+t^\prime-1}^{2} \\ 
& < & f(x_{l(k)}) + \eta_{kM} + \ldots + \eta_{kM+t^\prime-1}
\end{array}
$$
for all $t^\prime = 1,2,\ldots, t$.
 
\

We will prove that (\ref{induccion}) holds for $t+1$. Indeed,
$$
\begin{array}{rcl}
f(x_{kM+t+1)} & \leq & f_{\max}(x_{kM+t})+ \eta_{kM+t} - \Delta^{2}_{kM+t} \\
& = & \max\lbrace f(x_{kM+t}), f(x_{kM+t-1}), \ldots, f(x_{kM+t-M+1}) \rbrace + \eta_{kM+t} - \Delta^{2}_{kM+t} \\
& = & \max \lbrace \max \lbrace f(x_{kM}), \ldots, f(x_{kM+t-M+1}) \rbrace, f(x_{kM+1}), \ldots, f(x_{kM+t}) \rbrace + \\ 
&& \eta_{kM+t} - \Delta^{2}_{kM+t} \\
& = &  \max \lbrace f(x_{l(k)}), f(x_{kM+1}), \ldots, f(x_{kM+t}) \rbrace + \eta_{kM+t} - \Delta^{2}_{kM+t}
\end{array}
$$

Now by induction step,
$$
\max \lbrace f(x_{kM+1}), \ldots, f(x_{kM+t}) \rbrace < f(x_{l(k)}) + \eta_{kM} + \ldots + \eta_{kM+t+1}.
$$
  
Thus, 

$$
f(x_{kM+t+1)} \leq f(x_{l(k)})+ \eta_{kM} + \ldots + \eta_{kM+t-1} + \eta_{kM+t} -\Delta^{2}_{kM+t},
$$
so the inequality (\ref{induccion}) holds for $t+1$. 

\

Finally, since $(k+1)M-M+1\leq l(k+1) \leq kM+M,$ we have
$$
l(k+1)=kM+t \mbox{ for some } t \in \lbrace 1,2, \ldots, M \rbrace.
$$ 
\end{proof}

The next proposition is an important tool to prove global convergence of the \texttt{nmps} algorithm where  the inequality of Proposition \ref{propaux} is applied iteratively. This idea has also been  introduced by Birgin et. al in \cite{BMR03} and we adapted it for our case.  

\begin{proposition}
If $\lbrace f(x_{k}) \rbrace _{k \in \mathbbm{N}}$ is bounded below then $\displaystyle \lim_{k \to \infty}\Delta^{2}_{l(k)-1}=0$

\end{proposition}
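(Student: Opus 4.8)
The plan is to telescope the inequality from Proposition~\ref{propaux}. Write $a_k = f(x_{l(k)})$ and $\sigma_k = \eta_{kM} + \eta_{kM-1} + \cdots + \eta_{kM-M+1}$, so that Proposition~\ref{propaux} reads $a_{k+1} \le a_k + \sigma_k - \Delta_{l(k+1)-1}^2$. Since the $\eta_j > 0$ are summable with total sum $\eta$, the partial sums $\sum_{k} \sigma_k$ are bounded above by $\eta$ (each $\eta_j$ appears in at most one block $\sigma_k$). Summing the recurrence from $k=0$ (or from the first index $k \ge M$ where Proposition~\ref{propaux} was established, absorbing the finitely many earlier terms into a constant) up to $N-1$ gives
$$
\sum_{k=1}^{N} \Delta_{l(k)-1}^2 \le a_0 - a_N + \sum_{k=0}^{N-1} \sigma_k \le a_0 - a_N + \eta.
$$

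Now invoke the hypothesis that $\{f(x_k)\}_{k\in\mathbb{N}}$ is bounded below: since each $a_N = f(x_{l(N)})$ is a value of $f$ along the sequence, we have $a_N \ge \inf_k f(x_k) =: f_{\mathrm{low}} > -\infty$, so $a_0 - a_N \le a_0 - f_{\mathrm{low}}$ is bounded uniformly in $N$. Hence the partial sums $\sum_{k=1}^{N} \Delta_{l(k)-1}^2$ are bounded above by the fixed constant $a_0 - f_{\mathrm{low}} + \eta$, independent of $N$. A series of nonnegative terms with bounded partial sums converges, and therefore its general term tends to zero, i.e. $\lim_{k\to\infty} \Delta_{l(k)-1}^2 = 0$.

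The only real care needed is bookkeeping: confirming that each $\eta_j$ is counted at most once across the blocks $\sigma_k$ (the blocks $\{kM-M+1,\dots,kM\}$ are disjoint for distinct $k$), and handling the first few indices $k < M$ where Proposition~\ref{propaux} as stated does not directly apply — these contribute only finitely many extra terms and a finite additive constant, so they do not affect the conclusion. I expect no genuine obstacle here; this is essentially the standard Grippo--Lampariello--Lucidi-style argument (as in \cite{BMR03}) that summability of the perturbations $\eta_k$ plus boundedness below forces the ``forcing term'' $\Delta_{l(k)-1}^2$ to vanish.
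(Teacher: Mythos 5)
Your proposal is correct and follows essentially the same route as the paper: telescope the inequality of Proposition~\ref{propaux}, bound the accumulated $\eta$-terms by the total sum $\eta$, use boundedness below of $\{f(x_k)\}$ to get bounded partial sums of $\sum_k \Delta_{l(k)-1}^2$, and conclude that the general term vanishes. Your version is in fact somewhat more careful than the paper's (explicit partial sums, the disjointness of the $\eta$-blocks, and the finitely many initial indices where Proposition~\ref{propaux} does not directly apply), but the underlying argument is identical.
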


\begin{proof}
By applying the inequality (\ref{proposicion1}) we have 
$$
f(x_{l(k+1)}) \leq f(x_0) + \displaystyle \sum_{k=0}^{\infty}\eta_{k} - \displaystyle \sum_{k=1}^{\infty} \Delta^2_{l(k)-1},
$$
equivalently
$$
\displaystyle \sum_{k=1}^{\infty} \Delta^2_{l(k)-1} \leq f(x_0) -f(x_{l(k+1)}) + \displaystyle \sum_{k=0}^{\infty}\eta_{k}. 
$$
Now, since $f$ is bounded below we have $-f(x_k)\leq -C$ for all $k$, and due to summability of the sequence $\lbrace \eta_k \rbrace$, we obtain
$$
\displaystyle \sum_{k=1}^{\infty} \Delta^2_{l(k)-1}< +\infty,
$$
so $\displaystyle \lim_{k \to \infty}\Delta^{2}_{l(k)-1}=0$, as we want to prove.
\end{proof}

\

In consequence, we observe that 
$$
\lim_{k \to \infty}\Delta_{l(k)-1} = 0,
$$
since the steps $\Delta_k$ are small enough and positives.

\

Now we define set of index
$$
U= \lbrace l(1)-1, l(2)-1, l(3)-1, \ldots \rbrace
$$ 
where $\lbrace l(k) \rbrace$ is the sequence of index defined in the Proposition \ref{propaux}.

\ 

The following two results have been demonstrated by Kolda, Lewis and Torczon in \cite{KLT03}. 
  
\begin{proposition} \label{prop3}
Let $x \in \Omega$ and $\varepsilon \leq 0$, and let $K=K(x,\varepsilon)$ and $K^{\circ}=K^{\circ}(x,\varepsilon)$ for the bound constrained problem (\ref{problem}). Let $\textit{G}_{K^{\circ}}\subseteq D_{\bigoplus}$ the set of generators of $K^{\circ}$. Then, if $[-\nabla f(x)]_{K^{\circ}}\neq 0$, there is $d \in \textit{G}_{K^{\circ}}$ such that 
$$
\frac{1}{\sqrt{n}} \| [-\nabla f(x)]_{K^{\circ}} \| \leq -\nabla f(x)^{T}d.
$$
\end{proposition}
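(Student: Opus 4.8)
The plan is to exploit the structure of the polar cone $K^{\circ}=K^{\circ}(x,\varepsilon)$ for the bound constrained problem: since $K(x,\varepsilon)$ is generated by a subset of the coordinate directions $\{\pm e^{(i)}\}$, its polar $K^{\circ}$ is itself a (coordinate) box-like cone, and the set of generators $G_{K^{\circ}}\subseteq D_{\bigoplus}$ is again a collection of signed coordinate vectors. Concretely, for each index $i$ exactly one of three things happens: $e^{(i)}\in K$ (so the $i$-th coordinate of any element of $K^{\circ}$ is $\le 0$, and $-e^{(i)}\in G_{K^{\circ}}$), $-e^{(i)}\in K$ (symmetric), or neither (the $i$-th coordinate is unconstrained, and both $\pm e^{(i)}\in G_{K^{\circ}}$). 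The first step is to write down this explicit description and to identify the orthogonal projection $[-\nabla f(x)]_{K^{\circ}}$ coordinatewise: in the unconstrained coordinates it equals $-\partial f/\partial x^{(i)}$, and in the constrained coordinates it equals $\max\{0,-\partial f/\partial x^{(i)}\}$ or $\min\{0,-\partial f/\partial x^{(i)}\}$ as appropriate (i.e. a one-sided thresholding).

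Next I would pass from the Euclidean norm to the $\ell_\infty$ norm: for any vector $v\in\RR^n$ one has $\|v\|\le\sqrt{n}\,\|v\|_\infty=\sqrt{n}\,|v_{j}|$ where $j$ is a coordinate achieving the maximum. Apply this with $v=[-\nabla f(x)]_{K^{\circ}}$. Since $[-\nabla f(x)]_{K^{\circ}}\ne 0$ by hypothesis, the maximizing coordinate $j$ has $v_j\ne 0$, hence $v_j$ is either strictly positive or strictly negative. In either case the coordinatewise formula from the first step shows that the corresponding signed unit vector $d=\mathrm{sign}(v_j)\,e^{(j)}$ lies in $G_{K^{\circ}}$: if $v_j>0$ then either coordinate $j$ is unconstrained (so $e^{(j)}\in G_{K^{\circ}}$) or $-e^{(j)}\in K$ forcing $v_j=\max\{0,-\partial_j f\}>0$ and again $e^{(j)}\in G_{K^{\circ}}$; symmetrically for $v_j<0$.

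Finally I would compute the inner product for this choice of $d$. By the coordinatewise description, $-\nabla f(x)^{T}d = \mathrm{sign}(v_j)\,(-\partial f/\partial x^{(j)})$, and in each of the relevant cases this equals exactly $|v_j| = \|v\|_\infty$; for instance, when coordinate $j$ is thresholded from below with $v_j>0$, we have $v_j=-\partial_j f>0$, so $-\nabla f(x)^{T}d=-\partial_j f=v_j$. Chaining the two displays gives
$$
\frac{1}{\sqrt n}\,\|[-\nabla f(x)]_{K^{\circ}}\| \;\le\; \|[-\nabla f(x)]_{K^{\circ}}\|_\infty \;=\; |v_j| \;=\; -\nabla f(x)^{T}d,
$$
which is the claimed inequality. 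The main obstacle is purely bookkeeping: one must verify carefully, case by case over the three possibilities for each coordinate, that the projection onto $K^{\circ}$ really does act as the stated one-sided threshold (this uses that $K^{\circ}$ is a product of half-lines and lines aligned with the axes, so the projection decouples across coordinates), and that the chosen $d$ genuinely belongs to $G_{K^{\circ}}$ rather than merely to $K^{\circ}$. Once the coordinatewise picture is pinned down, the norm inequality is immediate.
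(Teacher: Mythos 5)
The paper does not actually prove this proposition: it is quoted without proof from Kolda, Lewis and Torczon \cite{KLT03} (and the hypothesis should read $\varepsilon \geq 0$, as in Proposition~\ref{prop4}). Your argument is correct and is essentially the standard one for the bound-constrained case: since $K(x,\varepsilon)$ is generated by signed coordinate vectors, $K^{\circ}(x,\varepsilon)$ is a Cartesian product of intervals $(-\infty,0]$, $[0,\infty)$, $\{0\}$ or $\RR$ along the axes, the Euclidean projection therefore decouples into coordinatewise one-sided thresholds, and the constant $1/\sqrt{n}$ is exactly the comparison $\|w\|\leq\sqrt{n}\,\|w\|_{\infty}$ applied to $w=[-\nabla f(x)]_{K^{\circ}}$, combined with the observation that a coordinate achieving $\|w\|_{\infty}$ supplies a generator $d\in G_{K^{\circ}}$ with $-\nabla f(x)^{T}d=\|w\|_{\infty}$. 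One point to tighten: your case analysis over the coordinates lists only three possibilities and omits the fourth, in which both $e^{(i)}$ and $-e^{(i)}$ generate $K(x,\varepsilon)$ (this happens when $u^{(i)}-x^{(i)}\leq\varepsilon$ and $x^{(i)}-l^{(i)}\leq\varepsilon$ simultaneously). In that case the $i$-th coordinate of every element of $K^{\circ}$ is forced to be zero, the projection in that coordinate is $0$, and neither $e^{(i)}$ nor $-e^{(i)}$ lies in $G_{K^{\circ}}$, so your blanket claim that $e^{(i)}\in K$ implies $-e^{(i)}\in G_{K^{\circ}}$ is not literally true. This does not break the proof, because the coordinate $j$ you select satisfies $\bigl([-\nabla f(x)]_{K^{\circ}}\bigr)_{j}\neq 0$ and hence cannot fall into this degenerate case, but the case should be acknowledged for the coordinatewise description of the projection and of $G_{K^{\circ}}$ to be complete.
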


\

\begin{proposition}\label{prop4}
Let $x \in \Omega$ and $\varepsilon \geq 0$, and let $K^{\circ}=K^{\circ}(x,\varepsilon)$ and $K=K(x,\varepsilon)$ for the bound constrained problem (\ref{problem}). Then
$$
\chi (x)\leq \| [-\nabla f(x)]_{K^{\circ}} \| + \sqrt{n} \| [-\nabla f(x)]_{K} 
\|\varepsilon. 
$$
\end{proposition}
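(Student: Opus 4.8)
The plan is to work directly from the definition
$$
\chi(x)=\max_{\substack{x+\omega\in\Omega,\\ \|\omega\|\leq 1}}-\nabla f(x)^T\omega,
$$
and to bound the quantity $-\nabla f(x)^T\omega$ uniformly over every admissible $\omega$ (that is, every $\omega$ with $x+\omega\in\Omega$ and $\|\omega\|\leq 1$); taking the maximum at the end then yields the claim. First I would invoke Moreau's decomposition: since $K=K(x,\varepsilon)$ and $K^{\circ}=K^{\circ}(x,\varepsilon)$ are mutually polar closed convex cones, the vector $-\nabla f(x)$ splits as $-\nabla f(x)=[-\nabla f(x)]_{K^{\circ}}+[-\nabla f(x)]_{K}$ with the two components mutually orthogonal, so for any admissible $\omega$,
$$
-\nabla f(x)^T\omega=[-\nabla f(x)]_{K^{\circ}}^T\omega+[-\nabla f(x)]_{K}^T\omega .
$$
The first term is controlled immediately by Cauchy--Schwarz and $\|\omega\|\leq 1$, giving $[-\nabla f(x)]_{K^{\circ}}^T\omega\leq\|[-\nabla f(x)]_{K^{\circ}}\|$.

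The heart of the argument, and the step I expect to be the main obstacle, is bounding the second term by $\sqrt{n}\,\varepsilon\,\|[-\nabla f(x)]_{K}\|$. Here I would use the explicit structure of the bound constrained problem: $K(x,\varepsilon)$ is generated by $\{e^{(i)}\mid u^{(i)}-x^{(i)}\leq\varepsilon\}\cup\{-e^{(i)}\mid x^{(i)}-l^{(i)}\leq\varepsilon\}$, so $K$, $K^{\circ}$ and the Euclidean projections onto them decouple across coordinates, acting nontrivially only on the ``nearly active'' indices and vanishing (for the $K$-component) elsewhere. Arguing coordinate by coordinate: if $u^{(i)}-x^{(i)}\leq\varepsilon$, then feasibility $x+\omega\in\Omega$ forces $\omega^{(i)}\leq u^{(i)}-x^{(i)}\leq\varepsilon$, while $[-\nabla f(x)]_{K}^{(i)}=\max\{-\nabla f(x)^{(i)},0\}\geq 0$; multiplying and using $\varepsilon\geq 0$ gives $[-\nabla f(x)]_{K}^{(i)}\omega^{(i)}\leq\varepsilon\,\bigl|[-\nabla f(x)]_{K}^{(i)}\bigr|$ in both the cases $\omega^{(i)}\geq 0$ and $\omega^{(i)}<0$. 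The symmetric estimate holds when $x^{(i)}-l^{(i)}\leq\varepsilon$, where $\omega^{(i)}\geq -\varepsilon$ and $[-\nabla f(x)]_{K}^{(i)}\leq 0$; and on the remaining coordinates $[-\nabla f(x)]_{K}^{(i)}=0$. Summing over all coordinates yields $[-\nabla f(x)]_{K}^T\omega\leq\varepsilon\,\|[-\nabla f(x)]_{K}\|_{1}\leq\sqrt{n}\,\varepsilon\,\|[-\nabla f(x)]_{K}\|$, the last inequality being the standard $\ell_{1}$--$\ell_{2}$ bound in $\RR^{n}$.

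Adding the two estimates gives $-\nabla f(x)^T\omega\leq\|[-\nabla f(x)]_{K^{\circ}}\|+\sqrt{n}\,\varepsilon\,\|[-\nabla f(x)]_{K}\|$ for every admissible $\omega$, and taking the maximum over such $\omega$ on the left-hand side produces exactly $\chi(x)$, completing the proof. One minor point I would check explicitly is the degenerate coordinate where both $u^{(i)}-x^{(i)}\leq\varepsilon$ and $x^{(i)}-l^{(i)}\leq\varepsilon$ hold simultaneously (a box of width at most $2\varepsilon$ in that coordinate), so that $K$ contains both $e^{(i)}$ and $-e^{(i)}$: there feasibility still gives $|\omega^{(i)}|\leq\varepsilon$ and the same coordinatewise bound $[-\nabla f(x)]_{K}^{(i)}\omega^{(i)}\leq\varepsilon\,\bigl|[-\nabla f(x)]_{K}^{(i)}\bigr|$, so the estimate is unaffected.
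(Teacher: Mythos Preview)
Your argument is correct. The paper itself does not supply a proof of this proposition; it simply records that the result is due to Kolda, Lewis and Torczon \cite{KLT03} and quotes the statement. Consequently there is no in-paper proof to compare against, and your proposal fills that gap.

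For what it is worth, your route---Moreau's decomposition $-\nabla f(x)=[-\nabla f(x)]_{K^{\circ}}+[-\nabla f(x)]_{K}$, Cauchy--Schwarz on the $K^{\circ}$-component, and the coordinatewise feasibility bound $|\omega^{(i)}|\leq\varepsilon$ on the ``nearly active'' indices followed by the $\ell_{1}$--$\ell_{2}$ inequality---is essentially the standard argument used in \cite{KLT03} for the bound constrained case, so methodologically you are aligned with the source the paper cites. The only cosmetic remark is that your claim that the projections onto $K$ and $K^{\circ}$ ``decouple across coordinates'' relies on the specific box structure of $\Omega$ (each of $K$ and $K^{\circ}$ is a Cartesian product of one-dimensional cones in the coordinate axes); this is true here and you use it implicitly, but it is worth stating explicitly since it is exactly what makes the coordinatewise case analysis legitimate and is the reason the result is specific to bound constraints rather than general linear constraints.
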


\

Next, we present the main global convergence result of \texttt{nmps} algorithm.

\begin{theorem}
Let $f:\mathbbm{R}^n\rightarrow \mathbbm{R}$ be continuously differentiable, and suppose $\nabla f(x)$ is Lipschitz continuous with constante $L$, $\|\nabla f(x) \| \leq \gamma$, for all $x \in \Omega$ and $\lbrace f(x_{k}) \rbrace _{k \in \mathbbm{N}}$ is bounded below. If $\lbrace x_k \rbrace _{k \in U}$ is the sequence generated by the \texttt{nmps} algorithm then
$$
\chi (x_k) \leq \sqrt{n}(L+ \gamma)\Delta_k \, \mbox{ for all } \, k \in U.
$$
\end{theorem}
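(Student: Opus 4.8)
The goal is to bound the stationarity measure $\chi(x_k)$ for indices $k \in U$, i.e., for iterations where the step length $\Delta_{l(k)-1}$ was the one just accepted and, crucially, the backtracking reached its final (accepted) value. The natural strategy is to combine Proposition~\ref{prop4}, which gives $\chi(x_k) \le \| [-\nabla f(x_k)]_{K^\circ} \| + \sqrt{n}\,\| [-\nabla f(x_k)]_{K}\|\,\varepsilon$, with an upper bound on $\| [-\nabla f(x_k)]_{K^\circ}\|$ coming from Proposition~\ref{prop3}. For the latter I would choose $\varepsilon = \Delta_k$ (this is the key identification: the backtracking radius plays the role of the cone parameter $\varepsilon$), so that $K = K(x_k, \Delta_k)$ and its polar $K^\circ$ describe exactly the directions along which one can move a distance $\Delta_k$ and stay feasible. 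Then $G_{K^\circ} \subseteq D_\oplus$, and every $d \in G_{K^\circ}$ satisfies $x_k + \Delta_k d \in \Omega$.

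\textbf{Key steps.} First, fix $k \in U$ and set $\varepsilon = \Delta_k$, $K = K(x_k,\Delta_k)$, $K^\circ = K^\circ(x_k,\Delta_k)$. If $[-\nabla f(x_k)]_{K^\circ} = 0$ the desired bound will follow trivially from Proposition~\ref{prop4} (with the $K^\circ$-term vanishing and $\|[-\nabla f(x_k)]_K\| \le \gamma$), so assume $[-\nabla f(x_k)]_{K^\circ} \ne 0$. By Proposition~\ref{prop3} there is $d \in G_{K^\circ} \subseteq D_\oplus$ with
$$
\tfrac{1}{\sqrt{n}} \| [-\nabla f(x_k)]_{K^\circ} \| \le -\nabla f(x_k)^\top d,
$$
and by construction $x_k + \Delta_k d \in \Omega$. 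Second — and this is the heart of the argument — I would use the fact that, since $k \in U$, the direction $d$ was \emph{tried} at step length $\Delta_k$ during backtracking and the nonmonotone condition (\ref{nmc}) \emph{failed} for it (otherwise a larger step would have been accepted, or $\Delta_k$ would not be the final value). Hence
$$
f(x_k + \Delta_k d) > f_{\max}(x_k) + \eta_k - \Delta_k^2 \ge f(x_k) - \Delta_k^2,
$$
which rearranges to $f(x_k + \Delta_k d) - f(x_k) > -\Delta_k^2$. Third, apply the standard descent estimate from Lipschitz continuity of $\nabla f$:
$$
f(x_k + \Delta_k d) - f(x_k) \le \Delta_k \nabla f(x_k)^\top d + \tfrac{L}{2}\Delta_k^2 \|d\|^2 = \Delta_k \nabla f(x_k)^\top d + \tfrac{L}{2}\Delta_k^2
$$
(using $\|d\| = 1$). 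Combining the last two displays gives $-\Delta_k^2 < \Delta_k \nabla f(x_k)^\top d + \tfrac{L}{2}\Delta_k^2$, so $-\nabla f(x_k)^\top d < \Delta_k + \tfrac{L}{2}\Delta_k \le (L+1)\Delta_k$ (say, bounding crudely), hence $\|[-\nabla f(x_k)]_{K^\circ}\| \le \sqrt{n}\,(L+1)\Delta_k$ — up to the precise constant bookkeeping, this is the first term. Fourth, plug this into Proposition~\ref{prop4} with $\varepsilon = \Delta_k$ and $\|[-\nabla f(x_k)]_K\| \le \|\nabla f(x_k)\| \le \gamma$, obtaining
$$
\chi(x_k) \le \sqrt{n}\,(L+1)\Delta_k + \sqrt{n}\,\gamma\,\Delta_k,
$$
and absorbing constants to reach $\chi(x_k) \le \sqrt{n}(L+\gamma)\Delta_k$.

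\textbf{Main obstacle.} The delicate point is step two: rigorously justifying that for $k \in U$ the failure of (\ref{nmc}) at $(x_k, \Delta_k, d)$ is legitimately available. One must be careful about what $\Delta_k$ denotes here — it should be the step length at the final (successful) backtracking stage, and the claim is that at that same stage \emph{every} feasible coordinate direction $d$ failing (\ref{nmc}) includes in particular the generator $d \in G_{K^\circ}$ produced by Proposition~\ref{prop3}; since that $d$ keeps $x_k + \Delta_k d$ feasible, it was indeed a candidate and must have failed. A secondary subtlety is matching the constant: the crude bound $(L+1)\Delta_k$ above should really be refined (e.g.\ the $\tfrac{L}{2}$ and the extra $\Delta_k$ combine, and one may exploit $\Delta_k \le 1$ or $\Delta_{k+1} \le 1$ to control lower-order terms) so that the sum of the two contributions is exactly $\sqrt{n}(L+\gamma)\Delta_k$ rather than something with an additive slack; I expect the paper handles this by a slightly sharper version of the descent inequality or by the normalization $\Delta_{k+1} \leftarrow 1$ in Step~2.1. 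Modulo these bookkeeping matters, the proof is the three-way combination: Proposition~\ref{prop3} $\Rightarrow$ a descent direction in $G_{K^\circ}$; failure of the nonmonotone test at that direction $\Rightarrow$ a lower bound on $f(x_k+\Delta_k d)-f(x_k)$; Lipschitz descent $\Rightarrow$ an upper bound; subtract and feed into Proposition~\ref{prop4}.
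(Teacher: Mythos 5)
Your proposal follows essentially the same route as the paper: Proposition~\ref{prop3} applied with $\varepsilon=\Delta_k$ to produce a feasible generator $d\in G_{K^{\circ}}$, the assertion that for $k\in U$ the test (\ref{nmc}) fails for that $d$ at step $\Delta_k$, a first-order expansion (the paper uses the mean value theorem plus Lipschitz continuity of $\nabla f$ where you use the descent lemma), and Proposition~\ref{prop4} to convert the bound on $\|[-\nabla f(x_k)]_{K^{\circ}}\|$ into a bound on $\chi(x_k)$; the paper additionally isolates, as a first case, the situation where no coordinate direction keeps $x_k+\Delta_k d$ in $int(\Omega)$, which your ``trivial case'' $[-\nabla f(x_k)]_{K^{\circ}}=0$ covers. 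Both points you flag as delicate are handled no more rigorously in the paper itself: the failure of (\ref{nmc}) at the accepted step is simply asserted from $k\in U$ without further justification, and the leftover $\sqrt{n}\,\Delta_k$ term is absorbed by writing $\sqrt{n}L\Delta_k+\sqrt{n}\Delta_k\le\sqrt{n}L\Delta_k$, so your concern about the constant bookkeeping is well founded.
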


\begin{proof}
We will consider two cases. 

\textit{Case 1.} If $x_k + \Delta_k d \notin int(\Omega)$ for all $d \in D_{\bigoplus}$, then $x_k + \Delta_k d$ is either on the boundary of or outside of $\Omega$ for all directions $d \in D_{\bigoplus}$.

\

In other words, if $l^{(i)}\leq x_k^{(i)} \leq u^{(i)}$ then $x_k^{(i)}-\Delta_k \leq l^{(i)}$ and $x_k^{(i)}+\Delta_k\geq u^{(i)}$ for all $i = 1, 2,\ldots, n$.

\

The last inequalities imply that if $x_k+\omega \in \Omega$, the vector $\omega$ cannot have their components greater than $\Delta_k$, that is, $\omega^{(i)}\leq \Delta_k$ for all $i$. Therefore, $\|\omega \|\leq \sqrt{n} \Delta_k$.

\

So,
$$
\begin{array}{rcl}
\chi(x_k) & = &\displaystyle \max_{\substack {x_k +\omega \in \Omega, \\ \| \omega\| \leq 1}} -\nabla f(x_k)^T \omega \\
& \leq & \displaystyle \max_{\substack {x_k +\omega \in \Omega, \\ \| \omega\| \leq 1}}\|\nabla f(x_k) \| \| \omega \| \\
& \leq &\|\nabla f(x_k) \| \sqrt{n} \Delta_k \\ 
& \leq & \sqrt{n} \gamma \Delta_k,
\end{array}
$$
which completes the proof for the Case 1.

\

\textit{Case 2.} Now we suppose that there is at least $d \in D_{\bigoplus}$ such that $x_k + \Delta_k d \in int(\Omega)$. Thus, the cone $K^{\circ}(x_k,\Delta_k)$ is generated by all the directions $d \in D_{\bigoplus}$ such that $x_k + \Delta_k d int(\Omega)$.

\

By the mean value theorem, we have that  
\begin{equation}\label{valormedio}
f(x_k+\Delta_k d_k)-f(x_k)=\Delta_k \nabla f(x_k+\lambda_k \Delta_k d_k)^Td_k,
\end{equation}
for some $\lambda_k \in [0,1]$.

\

Since $k \in U$, this implies 
$$
0 \leq f(x_k+ \Delta_k d_k)- f_{\max}(x_k)- \eta_k + \Delta_k^2.
$$

\

Taking into account that $-f_{\max}(x_k) \leq -f(x_k)$ and $\eta_k > 0$ for all $k$, we obtain  
\begin{equation}\label{desigualdad}
0 \leq f(x_k+ \Delta_k d_k) -f(x_k) + \Delta_k^2.
\end{equation}

\

Then, we replace (\ref{valormedio}) in (\ref{desigualdad})
$$
0 \leq \Delta_k \nabla f(x_k+\lambda_k \Delta_k d_k)^Td_k + \Delta_k^2.
$$

\

Next, we divide the last inequality by $\Delta_k$ and adding $-\nabla f(x_k)^Td_k$, we get the following inequality
$$
-\nabla f(x_k)^Td_k \leq (\nabla f(x_k+ \lambda_k \Delta_k d_k)- \nabla f(x_k))^T d_k + \Delta_k.
$$

\

Using the Proposition \ref{prop3} we have
$$
\dfrac{1}{\sqrt{n}} \| [-\nabla f(x)]_{K^{\circ}} \| \leq (\nabla f(x_k+ \lambda_k \Delta_k d_k)- \nabla f(x_k))^T d_k + \Delta_k.
$$
 
Then by the Cauchy-Schwarz inequality, the fact that $\| d_k \|=1$ for all $k$ and the boundedness hypothesis of the gradient, we obtain
$$
\dfrac{1}{\sqrt{n}} \| [-\nabla f(x)]_{K^{\circ}} \| \leq \|(\nabla f(x_k+ \lambda_k \Delta_k d_k)- \nabla f(x_k)) \| + \Delta_k \leq L \Delta_k + \Delta_k.
$$

In consequence,
$$
\| [-\nabla f(x)]_{K^{\circ}} \| \leq \sqrt{n} L \Delta_k + \sqrt{n} \Delta_k \leq \sqrt{n} L \Delta_k.
$$

Finally, combining the Proposition \ref{prop4} with the above result
$$
\chi (x)\leq \| [-\nabla f(x)]_{K^{\circ}} \| + \sqrt{n} \| [-\nabla f(x)]_{K} \|\varepsilon \leq \| [-\nabla f(x)]_{K^{\circ}} \| + \sqrt{n} \gamma \Delta_k \leq \sqrt{n} L \Delta_k + \sqrt{n} \gamma \Delta_k 
$$
consequently,
\begin{center}
$\chi (x)\leq \sqrt{n}(L+ \gamma)\Delta_k$,
\end{center}
and the proof is complete.

\end{proof}

\section{Numerical results}

In this section we show and analyse the numerical results obtained using our nonmonotone pattern search bound constrained optimization \texttt{nmps} algorithm. All the numerical experiments were executed on a computer with a 2.3 GHz Intel Core i5--6200u processor (8 GB RAM). We implemented the  \texttt{nmps} algorithm in \texttt{matlab R2016b 64-bit}. 

\

To the purpose of carefully analyse the performance of our algorithm we decided to organize our study in two parts. First, we compare the \texttt{nmps} algorithm with the  \texttt{patternsearch} routine from \texttt{matlab}'s optimization toolbox, since both algorithms are based on pattern search methods. Then, we study the performance of \texttt{nmps} algorithm using different line search strategies \cite{C09,KLNS15,NS13,ZH04} and the classical Armijo's rule \cite{A66}.

\

We have selected a set of 63 bound constrained problems from Hock--Schittkowski collection \cite{HS81}. Since this collection has only 9 bound constrained problems, we have modified other 54 problems with general constraints, extracting the linear and nonlinear constraints from each one of them. The detailed list of these problems and their characteristics is provided in 
Table~\ref{tab:infoproblemas}.

\

As it is usual in derivative-free optimization articles, we are interested in the number of functional values needed to satisfy the stopping criteria, which are: reaching a sufficiently small step length ($\Delta_k < \Delta_{\mbox{tol}}$), attaining the maximum number of function evaluations $MaxFE$ or attaining the maximum number of iterations $MaxIt$. We adopt the convergence test proposed in \cite{MW09} to measure the ability of an algorithm to improve an initial approximation and to declare that a problem has been solved if the following condition holds

\begin{equation} \label{test}
f(x_0)-f(x) \geq (1-\tau)(f(x_0)-f_{\mbox{L}}),
\end{equation}
where $x_0$ is the initial feasible approximation, $\tau > 0$ is the level of accuracy and $f_{\mbox{L}}$ is the smallest functional value obtained among the considered solvers. We use the performance profile graphs  \cite{DM02,MW09} to illustrate the results obtained with (\ref{test}). 

\

\begin{table}
\begin{center}
\resizebox{8cm}{!} {
\begin{tabular}{|c|c|c|c|c|}
\hline
Prob. & N$^{\circ}$. HS & n & bound & objective \\
 & & & constraints & function \\
\hline
1 & 1 & 2 & 1 & Generalized polynomial \\ \hline
2 & 2 & 2 & 1 & Generalized polynomial\\ \hline
3 & 3 & 2 & 1 & Generalized polynomial\\ \hline
4 & 4 & 2 & 2 & Generalized polynomial\\ \hline
5 & 5 & 2 & 4 & General\\ \hline
6 & 25 & 3 & 6 & Sum of squares\\ \hline
7 & 38 & 4 & 8 & Generalized polynomial\\ \hline
8 & 45 & 5 & 10 & Constant\\ \hline
9 & 110 & 10 & 20 & General\\ \hline
10 & 13 & 2 & 2 & Quadratic\\ \hline
11 & 15 & 2 & 1 & Generalized polynomial\\ \hline
12 & 16 & 2 & 3 & Generalized polynomial\\ \hline
13 & 17 & 2 & 3 & Generalized polynomial\\ \hline
14 & 18 & 2 & 4 & Quadratic\\ \hline
15 & 19 & 2 & 4 & Generalized polynomial\\ \hline
16 & 20 & 2 & 2 &  Generalized polynomial\\ \hline
17 & 21 & 2 & 4 & Quadratic\\ \hline
18 & 23 & 2 & 4 & Quadratic\\ \hline
19 & 24 & 2 & 2 & Generalized polynomial\\ \hline
20 & 30 & 3 & 6 & Quadratic\\ \hline
21 & 31 & 3 & 6 & Quadratic\\ \hline
22 & 32 & 3 & 3 & Quadratic\\ \hline
23 & 33 & 3 & 4 & Generalized polynomial\\ \hline
24 & 34 & 3 & 6 & Linear\\ \hline
25 & 35 & 3 & 3 & Quadratic\\ \hline
26 & 36 & 3 & 6 & Generalized polynomial\\ \hline
27 & 37 & 3 & 6 & Generalized polynomial\\ \hline
28 & 41 & 4 & 8 & Generalized polynomial\\ \hline
29 & 42 & 4 & 2 & Quadratic\\ \hline
30 & 44 & 4 & 4 & Quadratic\\ \hline
31 & 53 & 5 & 10 & Quadratic\\ \hline
32 & 54 & 6 & 12 & General\\ \hline
33 & 55 & 6 & 8 & General\\ \hline
34 & 57 & 2 & 2 & Sum of squares\\ \hline
35 & 59 & 2 & 4 & General\\ \hline
36 & 60 & 3 & 6 & Generalized polynomial\\ \hline
37 & 62 & 3 & 6 & General\\ \hline
38 & 63 & 3 & 3 & Quadratic\\ \hline
39 & 64 & 3 & 3 & Generalized polynomial\\ \hline
40 & 65 & 3 & 6 & Quadratic\\ \hline
41 & 66 & 3 & 6 & Linear\\ \hline
42 & 68 & 4 & 8 & General\\ \hline
43 & 69 & 4 & 8 & General\\ \hline
44 & 71 & 4 & 8 & Generalized polynomial\\ \hline
45 & 72 & 4 & 8 & Linear\\ \hline
46 & 73 & 4 & 4 & Linear\\ \hline
47 & 74 & 4 & 8 & Generalized polynomial\\ \hline
48 & 75 & 4 & 8 & Generalized polynomial\\ \hline
49 & 76 & 4 & 4 & Quadratic\\ \hline
50 & 80 & 5 & 10 & General\\ \hline
51 & 81 & 5 & 10 & General\\ \hline
52 & 83 & 5 & 10 & Quadratic\\ \hline
53 & 84 & 5 & 10 & Quadratic\\ \hline
54 & 86 & 5 & 5 & Generalized polynomial\\ \hline
55 & 93 & 6 & 6 & Generalized polynomial\\ \hline
56 & 101 & 7 & 14 & Generalized polynomial\\ \hline
57 & 102 & 7 & 14 & Generalized polynomial\\ \hline
58 & 103 & 7 & 14 & Generalized polynomial\\ \hline
59 & 104 & 8 & 16 & Generalized polynomial\\ \hline
60 & 106 & 8 & 16 & Linear\\ \hline
61 & 108 & 9 & 2 & Quadratic\\ \hline
62 & 114 & 10 & 20 & Quadratic\\ \hline
63 & 119 & 16  & 32 & Generalized polynomial\\ \hline

\end{tabular}
}
\caption{Characteristics of test problems.}
\label{tab:infoproblemas}
\end{center}
\end{table}

Given $P$ the set of problems, $|P|$ denotes the cardinality of $P$ and $S$ the set of considered solvers. The performance profile of a solver $s \in S$ is defined as the fraction of problems where the performance ratio is at most $\alpha$, that is, $\rho_{s}(\alpha)=\frac{1}{|P|}\mbox \, \mbox{size} \, \lbrace p \in Pv: r_{p,s}\leq \alpha \rbrace$, where $r_{p,s}=\frac{t_{p,s}}{\lbrace  \min t_{p,s}: s \in S\rbrace}$, $t_{p,s}$ is the number of function evaluations required to satisfy the convergence test (\ref{test}).

\

We used the same initial approximation $x_0$ as indicated in \cite{HS81}, projecting onto the bound constraints if the initial approximation was not feasible. After some preliminary tests we adopted $\eta_k=1.1^{-k}$ for all $k$. Finally, the following algorithmic parameters were set: $\Delta_0 = 1.0$, as the initial step length, $M = 15$, $MaxFE = 2500$, $MaxIt = 5000$ and $\Delta_{tol} = TOL = 10^{-6}$.

\

It is worth mentioning three important implementation details of our algorithm. First, the function $f$ is evaluated in all possible coordinate directions and the accepted new approximation is such that produces the minimum functional value of $f(x_k + \Delta_k d)$. Second, the step length is updated using the following scheme $\Delta_{k+1}= \min \lbrace 1, 2\Delta_k \rbrace$. Third, the
accepted points are stored in a memory for the purpose to avoid revisiting older points without slow down the implementation. See \cite{LST07}.

\subsection{Comparison of performance profiles between \texttt{nmps} and \texttt{patternsearch} } \label{experimentos1}

We tested our algorithm \texttt{nmps} using the set of test problems and we compared it with the well established routine \texttt{patternsearch} from \texttt{matlab}. Since both codes are based on a pattern search scheme, we set the same algorithmic parameters. The numerical results are shown in Table~2.

\

In Figure \ref{f:performanceprofile63} we show the performance profile pictures using condition (\ref{test}) with three levels of accuracy: $\tau=10^{-1}$, $\tau=10^{-3}$ and $\tau=10^{-5}$, where a smaller value of $\tau$ means the satusfaction of condition (\ref{test}) is more strict. In a performance profile plot, the top curve represents the most efficient method within a factor $\tau$ of the best measure. When both methods match with the best result, then they are both counted as successful. This means that the sum of the successful percentages may exceed $100\%$.

\begin{figure} [H]
 \centering
  \subfloat[]{
   \label{f:tol1}
    \includegraphics[width=0.5\textwidth]{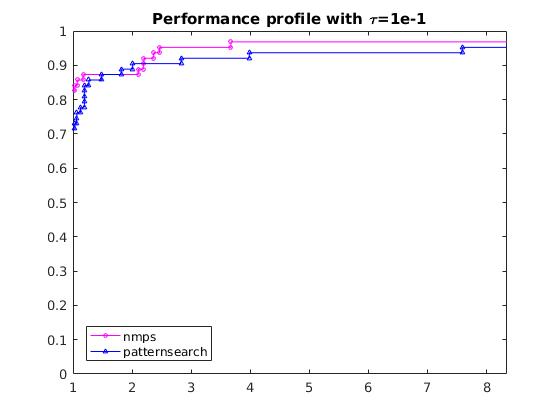}}
  \subfloat[]{
   \label{f:tol2}
    \includegraphics[width=0.5\textwidth]{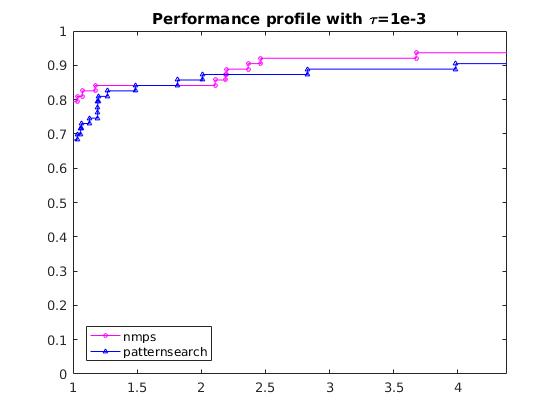}}
    \\    
    \subfloat[]{
    \label{f:tol3}
    \includegraphics[width=0.5\textwidth]{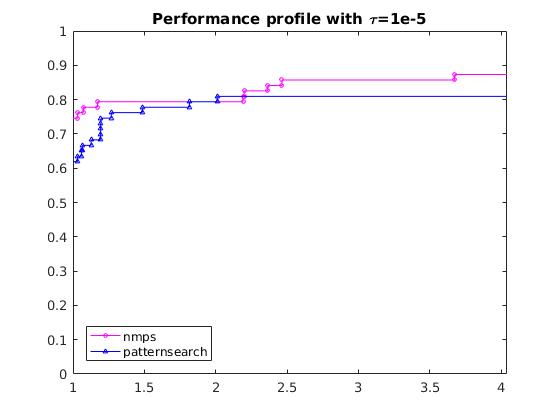}}
 \caption{Performance profiles of \texttt{nmps} and \texttt{patternsearch}.}
 \label{f:performanceprofile63}
\end{figure}

\

In Figure \ref{f:tol1}, with $\tau=10^{-1}$, we observe that \texttt{nmps} algorithm is the best solver in the $82\%$ of the set problems while \texttt{patternsearch} does it in $71\%$. We also see, within a factor of $1.7$ of the best solver, both algorithms have a similar behaviour  and the performance profile shows these algorithms can solve a problem with a probability of $0.87$ with respect to the best solver. Finally, if the goal is to solve efficiently $95\%$ of the problems, the \texttt{nmps} algorithm accomplishes this by using $2.4$ times the minimum number of function evaluations while \texttt{patternsearch} needs a factor of $7.5$.

\

Now, when we increase the level of accuracy to $\tau=10^{-3}$ (Figure \ref{f:tol2}), we note that  \texttt{nmps} wins in the $79\%$ of the problems in comparison with the $68\%$ of \texttt{patternsearch}. Also, both solvers are equivalent if the solution is required within a factor of $1.7$ of the best solver, with a probability of $0.84$. Although both solvers can  reach the solution in, at most, $95\%$ of the problems, our algorithm was closer than the other since it solved $94\%$ of them, using $3.7$ times the minimum number of function evaluations. 

\

Finally, we observe in Figure \ref{f:tol3} that the performance of both solvers deteriorate using 
$\tau=10^{-5}$ as the level of accuracy. In any case, the algorithm \texttt{nmps} wins in $74\%$ of the problems meanwhile \texttt{patternsearch} wins in $62\%$. As before, \texttt{nmps} algorithm 
performs better than \texttt{patternsearch} if you consider a solver that finds the solution using $1.7$ times the minimum number of function evaluations, with a probability of $0.8$. In such a case, we could expect at most that \texttt{nmps} algorithm solves $87\%$ of the problems  while \texttt{patternsearch}  solves $81\%$.

\

We conclude that, regardless the level of accuracy, our algorithm outperforms the  \texttt{patternsearch} routine with the set of test problems considered. In fact, our algorithm always has a probability $10\%$ greater than \texttt{patternsearch} to get the solution.
 
\

In the next subsection we will test our algorithm using other line search strategies in order to analyse and understand the advantages of employing different nonmonotone line search procedures.

\

\subsection{Comparison of performance profiles using other line search strategies} \label{otrasbusquedas}

Recently, some authors proposed different nonmonotone line search strategies for solving unconstrained minimization problems and nonlinear systems, with derivatives based and derivative-free methods \cite{C09,KLNS15,NS13,ZH04}. They show that such approaches are successful, therefore we have adopted them to our bound constrained derivative-free optimization problem (\ref{problem}). 

\

The first approach, which is called {\it C-line search} and we have implemented in \texttt{Cpatternsearch} algorithm, is similar to the nonmonotone line search condition (\ref{nmc}), where $f_{\max}(x_k)$ is replaced by the sequence $\{C_k\}$ given by 
\begin{equation} \label{C}
Q_{k+1}=r_k Q_k +1, \hspace{1cm} C_{k+1}=\dfrac{r_kQ_k(C_k+ \eta_k)+f_{k+1}}{Q_{k+1}}
\end{equation}
with $Q_0=1$, $C_0=f(x_0)$, $r_k \in [0,1]$ and $\lbrace \eta_k \rbrace$ satisfying	 $\sum_{k=0}^{\infty}\eta_{k}=\eta < \infty$ for all $k=0,1,2,\ldots$

\

The second strategy, which is called {\it $\lambda$-line search} and we have implemented in \texttt{$\lambda$patternsearch} algorithm, is also analogous to (\ref{nmc}) but in this case $f_{\max}(x_k)$ is defined by
\begin{equation} \label{lambda}
f_{\max}(x_k)=\max \lbrace f(x_k),\displaystyle \sum_{r=0}^{m(k)-1} \lambda_{k_r}f(x_{k-r}) \rbrace
\end{equation}
with $M \in \mathbbm{N}$, $m(k)=\min \lbrace k,M-1 \rbrace$, $\lambda_{k_r}\geqslant \lambda$ and $\sum_{r=0}^{m(k)-1}\lambda_{k_r}=1$ for all $k=0,1,2,\ldots$

\

Finally, we have adapted the classical Armijo's rule to our bound constrained problem (\ref{problem})
\begin{equation}\label{armijo}
f(x_k + \Delta_k d)\leq f(x_k)+ \eta_k - \Delta_k^{2}
\end{equation}
with $\lbrace \eta_k \rbrace$ y $\Delta_k$ defined as in the Algorithm 1. It was implemented in \texttt{armijo} algorithm.

\

Again, we chose $\eta_k=1.1^{-k}$ for all $k$, for the three new conditions. Also, we adopted $M=15$ and $\lambda_{k_r}=1/m(k)$ for all $r$ in \texttt{$\lambda$patternsearch} and $r_k=0.85$ for all $k$ in \texttt{Cpatternsearch}. Next, we show the performance profiles with convergence test (\ref{test}) for solving our set of test problems using \texttt{nmps}, \texttt{patternsearch}, \texttt{Cpatternsearch}, \texttt{$\lambda$patternsearch} and \texttt{armijo} algorithms. The numerical results are also presented in Table~2.

\

In Figure \ref{f:tol11}, for $\tau=10^{-1}$, we see that \texttt{$\lambda$patternsearch} 
attains the best performance in $76\%$ of the problems, followed by \texttt{nmps} with $59\%$ and \texttt{Cpatternsearch} with  $57\%$, below  we find \texttt{patternsearch} and \texttt{armijo} with $56\%$ and $51\%$ respectively. We also observe, within a factor of $2.5$ of the best solver, \texttt{$\lambda$patternsearch} reaches the greater probability of solving a problem (around $0.92$) and \texttt{nmps} follows it with $0.9$. Furthermore, in this case, \texttt{patternsearch} is the solver with the lowest performance, with a probability of $0.76$. Moreover, with this level of accuracy, we note that \texttt{armijo} solves almost $97\%$ of the problems using a factor of $4.9$ times the minimum number of function evaluations, meanwhile \texttt{nmps} has the same behaviour requiring a factor of $5.75$.

\

Figure \ref{f:tol22} displays the performance profile for $\tau=10^{-3}$, increasing in this way the level of accuracy. Once more, \texttt{$\lambda$patternsearch} is the most successful solver with a probability of $0.63$, followed by \texttt{Cpatternsearch} and \texttt{nmps} with a probability of 
$0.62$ and $0.6$, respectively. In the last positions, we find \texttt{armijo} and \texttt{patternsearch} with $0.52$ and $0.51$, respectively. Now, within a factor of $2.5$ of the best solver, \texttt{nmps} exhibits the best performance solving  $89\%$ of the problems. Later, 
\texttt{Cpatternsearch} and \texttt{armijo} get $87\% $ and $85\%$ on the resolution of our set of problems. With this level of accuracy, the most we can expect is to solve $94\%$ of the problems within a factor of $3.8$ of the best solver. In the same sense, \texttt{Cpatternsearch} and \texttt{nmps} present a similar behaviour.

\

Finally, Figure \ref{f:tol33}  presents the performance profile for $\tau=10^{-5}$, requiring in this way a greater descent in the objective function. We observe that \texttt{Cpatternsearch}  wins in $65\%$ of the problems, followed by \texttt{nmps} and \texttt{$\lambda$patternsearch}
with $57\%$, \texttt{armijo} with  $49\%$ and \texttt{patternsearch} with $44\%$. We see, within a factor of $2.5$ of the best solver, \texttt{Cpatternsearch}  obtains the higher probability for solving a problem ($0.87$), followed by \texttt{nmps} ($0.84$). In this figure, as opposed to Figures \ref{f:tol11} and \ref{f:tol22}, we can see how the performance of the different methods are slightly separated from each other. Also, \texttt{Cpatternsearch} algorithm is always on top of all the remaining solvers. In this case, \texttt{Cpatternsearch} achieves the solution in $92\%$  of the problems within a factor of $2.79$ followed by  \texttt{nmps} that solves $87\%$ of the problems using $3.6$ times the minimum number of function evaluations. The other solvers can only solve at most $80\%$ of the problems employing greater factors.

\begin{table}
\begin{center}
\resizebox{16cm}{!} {
\begin{tabular}{|c|c|c|c|c|c|c|c|c|c|c|c|c|c|c|c|}
\hline
Prob. & \multicolumn{3}{c|}{\texttt{nmps}}& \multicolumn{3}{c|}{\texttt{patternsearch}} &\multicolumn{3}{c|}{\texttt{$\lambda$patternsearch}}&
\multicolumn{3}{c|}{\texttt{Cpatternsearch}}&\multicolumn{3}{c|}{\texttt{armijo}}\\
\cline{2-16}

& $It.$ & $FE.$ & $stop$
& $It.$ & $FE.$ & $stop$
& $It.$ & $FE.$ & $stop$
& $It.$ & $FE.$ & $stop$
& $It.$ & $FE.$ & $stop$\\
\hline

1  &  341 & 352  & TOL & 625 & 2500 & MaxFE& 339& 352 &TOL &  530& 352 & TOL& 1003 & 2500 & MaxFE\\ \hline
2  & 402 &  323 & TOL & 58 & 207 & TOL &5000 & 6&MaxIt &628 &281&TOL & 277 & 155 & TOL\\ \hline
3  & 277 & 83 & TOL & 39 &  119 & TOL &	1660& 2500& MaxFE&307 &83& TOL&1659 & 2500 &MaxFE\\ \hline
4  & 277 & 46 & TOL &34 & 84 & TOL &5000 & 14 &MaxIt &307 &46& TOL& 277 & 48& TOL\\ \hline
5  & 293 & 305 & TOL & 36 & 145 & TOL  &269 & 337&TOL & 305&314& TOL& 269& 336&TOL\\ \hline
6  & 700 & 2500 & MaxFE & 513 & 2500 & MaxFE &5000 & 1330&MaxIt &670 &2500& MaxFE& 552& 2500& MaxFE\\ \hline
7  & 1124 & 2500 & MaxFE & 176 & 1409 & TOL &5000 & 177&MaxIt &974 &2500& MaxFE& 455& 2500& MaXFE\\ \hline
8  & 278 & 219 & TOL & 50 & 290 & TOL &	5000& 134& MaxIt& 307&132& TOL& 278 & 248& TOL\\ \hline
9  & 262 & 2500 & MaxFE &129  & 2500 & TOL & 141& 2500& MaxFE&211 &2500&MaxFE & 141& 2500&MaxFE\\ \hline
10 & 278 &  64 & TOL & 26 & 77 & TOL &278 & 64&TOL &307 &64& TOL& 278 & 64 &TOL\\ \hline
11  & 277 & 158 & TOL & 625 & 2500 & MaxFE&5000 &106 &MaxIt &306 &156&TOL &277  &539 &TOL\\ \hline
12  & 278 & 150 & TOL & 93 & 343 & TOL & 5000&98&MaxIt &306 &139& TOL& 278& 143&TOL\\ \hline
13  & 278 & 150 & TOL & 93 & 343 & TOL & 5000&98&MaxIt & 306&139& TOL& 278& 143&TOL\\ \hline
14  & 278 & 45 & TOL & 37 & 92  & TOL &5000 &17 &MaxIt & 307&45&TOL & 278& 45&TOL\\ \hline
15  & 278 & 77 & TOL & 48 & 133 & TOL & 278&77&TOL & 5000&77&MaxIt & 278& 77&TOL\\ \hline
16  & 278 & 151 & TOL & 93 & 345 & TOL &5000 &99&MaxIt & 306&140& TOL& 278& 144&TOL\\ \hline
17  & 277 & 63 & TOL & 21 & 64 & TOL & 277&63&TOL & 307&63&TOL & 277 & 63&TOL\\ \hline
18  & 288 & 260 & TOL & 34 & 137 & TOL & 277&289&TOL & 305&314& TOL& 270 & 369& TOL\\ \hline
19  & 834 & 2500 & MaxFE & 627 & 2500 & MaxFE &834 &2500&MaxFE & 834&2500& MaxFE& 834& 2500&MaxFE\\ \hline
20  & 278 & 108 & TOL & 26 & 131 & TOL &5000 &23&MaxIt &307 &108& TOL& 278& 108&TOL\\ \hline
21  & 278 & 106 & TOL & 26 & 128 & TOL & 5000&21&MaxIt & 307&106&TOL & 278& 106&TOL\\ \hline
22  & 277 & 76 & TOL & 47 & 183 & TOL &277 &76& TOL& 306&76& TOL&277 & 82&TOL\\ \hline
23  & 277 & 712 & TOL & 37 & 129 & TOL & 277&708&TOL & 307&791&TOL & 277& 704&TOL\\ \hline
24  & 278 & 480 & TOL & 128  & 747 & TOL &278 &480& TOL& 307&480& TOL& 278& 480&TOL\\ \hline
25  &  289 & 480 & TOL & 130 & 774 & TOL &278 &508&TOL &305 &522& TOL&273 & 625&TOL\\ \hline
26  & 281 & 192 & TOL & 95 & 402 & TOL & 281&192&TOL &5000 &192& MaxIt& 281& 192&TOL\\ \hline
27  & 5000 & 443 & MaxIt & 34 & 86 & TOL & 5000 &443&TOL &5000 &437& MaxIt&5000 & 443&MaxIt\\ \hline
28  & 278 & 81 & TOL & 20 & 81 & TOL & 5000&13&MaxIt &307 &81& TOL&278 & 81&TOL\\ \hline
29  & 278 & 146 & TOL & 43 & 259 & TOL &5000 &44&MaxIt & 307&146&TOL & 278& 146&TOL\\ \hline
30  & 625 & 2500 & MaxFE & 418 & 2500 & MaxFE &625 &2500&MaxFe & 625&2500& MaxFE& 625& 2500&MaxFE\\ \hline
31  & 288 & 800 & TOL & 39 & 391 & TOL &277 &939&TOL & 306&965& TOL& 270& 1130&TOL\\ \hline
32  & 278 & 13 & TOL & 247 & 2500 & MaxFE &278 &13& TOL& 307&287&TOL & 278 & 13& TOL\\ \hline
33  & 277 & 1818 & TOL & 51 & 398 & TOL & 384&2500&MaxFE  & 306&2011& TOL&277 & 1810&TOL\\ \hline
34  & 280 & 140 & TOL & 61 & 191 & TOL &5000 &77&MaxIt & 307&120& TOL&278 & 134&TOL\\ \hline
35  & 278 & 326 & TOL & 119 & 410 &TOL &277 &320&TOL & 307&335& TOL& 277& 509&TOL\\ \hline
36  & 403 & 737 & TOL &  125 & 751 & TOL & 294&746& TOL& 322&755&TOL & 278& 744&TOL\\ \hline
37  & 285 & 287 & TOL & 48 & 210 & TOL &284 &122&TOL & 5000&275& TOL&284 & 122&TOL\\ \hline
38  & 500 & 2500 & MaxFE & 417  & 2500 & MaxFE &500 &2500&MaxFE &500 &2500&MaxFE & 500& 2500&MaxFE\\ \hline
39  & 392 & 1501 & TOL & 325 & 1945 & TOL &5000 &1135&MaxIt & 5000&1447& MaxIt& 634& 2500&TOL\\ \hline
40  & 278 & 106 & TOL & 70 & 297 & TOL &5000 &61&MaxIt & 306&106&TOL & 278& 106&TOL\\ \hline
41  & 278 & 592 & TOL &  140 & 782 & TOL &278 &750&TOL & 307&592&TOL &420 & 1842&TOL\\ \hline
42  & 277 & 1241 & TOL & 46 & 288 & TOL & 277&1234&TOL & 307&1377&TOL & 278& 1238&TOL\\ \hline
43  & 282 & 1373 & TOL & 48 & 299 & TOL &278 &1239&TOL &445 &2069&TOL & 278& 1226&TOL\\ \hline
44  & 278 & 110 & TOL & 53 & 252 & TOL & 5000&38&MaxIt & 307&110&TOL & 278& 110&TOL\\ \hline
45  & 278  & 107 & TOL & 76 & 431 & TOL & 5000&39&MaxIt & 306&107& TOL& 278& 107&TOL\\ \hline
46  & 278 & 100 & TOL & 66 & 352 & TOL & 278&100&TOL & 306&100&TOL & 278& 100&TOL\\ \hline
47  & 278 & 121 & TOL & 20 & 117 & TOL & 278&121&TOL & 307&107&TOL & 278& 121&TOL\\ \hline
48  & 278 & 106 & TOL & 20 & 113& TOL&278 &106& TOL& 307&104&TOL & 278& 106&TOL\\ \hline
49  & 278 & 195 & TOL & 60 & 426& TOL & 278&195&TOL & 306&195&TOL & 277& 177&TOL\\ \hline
50  & 278 & 457 & TOL & 74 & 484&TOL &278 &457&TOL & 307&181& TOL& 278& 457&TOL\\ \hline
51  & 287 & 538 & TOL & 46 & 396& TOL&364 &2500&MaxFE & 318&663&TOL &361 & 2500&MaxFE\\ \hline
52  & 284 & 1289 & TOL & 20 & 101& TOL& 284&1289&TOL & 5000&155&TOL & 284& 1289&TOL\\ \hline
53  & 405 & 2500 & MaxFE & 625 & 2500&MaxFE &405 &2500&MaxFE & 405&2500&MaxFE & 405& 2500&MaxFE\\ \hline
54  & 602 & 1443 & TOL & 85 & 826& TOL&249 &1016&TOL & 450&1275&TOL & 252& 1045&TOL\\ \hline
55  & 277 & 1856 & TOL & 106 & 824& TOL&278 &1861&TOL & 307&2055&TOL & 277& 1849&TOL\\ \hline
56  & 357 & 2500 & MaxFE & 233 & 2500&MaxFE &5000 &547&MaxIt & 361&2500&MaxFE & 247& 2500&MaxFE\\ \hline
57  & 376 & 2500 & MaxFE & 233 & 2500& MaxFE&5000 &547&MaxIt &347 &2500& MaxFE& 255& 2500&MaxFE\\ \hline
58  & 376 & 2500 & MaxFE  & 232 & 2500& MaxFE&5000 &964&MaxIt &345 &2500& MaxFE& 257& 2500&MaxFE\\ \hline
59  & 278 & 621 & TOL & 100 & 1294& TOL & 213&2500& MaxFE& 306&621& TOL& 215& 2500&MaxFE\\ \hline
60  & 167 & 2500 & MaxFE & 157 & 2500& MaxFE&167 &2500&MaxFE & 167&2500&MaxFE & 167& 2500&MaxFE\\ \hline
61  & 167 & 2500 & MaxFE & 157 & 2500& MaxFE&167 &2500& MaxFE&167 &2500& MaxFE& 167& 2500&MaxFE\\ \hline
62  & 149 & 2500 & MaxFE & 150 & 2500& MaxFE&149 &2500& MaxFE& 149&2500&MaxFE & 149& 2500&MaxFE\\ \hline
63  & 278 & 2446 & TOL & 104 & 2500& MaxFE& 5000&2142& MaxIt& 307&2446&TOL & 278& 2446&TOL\\ \hline

\end{tabular}
}
\caption{Results of numerical experiments}
\end{center}
\end{table}

\

\begin{figure}[H]
 \centering
  \subfloat[]{
   \label{f:tol11}
    \includegraphics[width=0.5\textwidth]{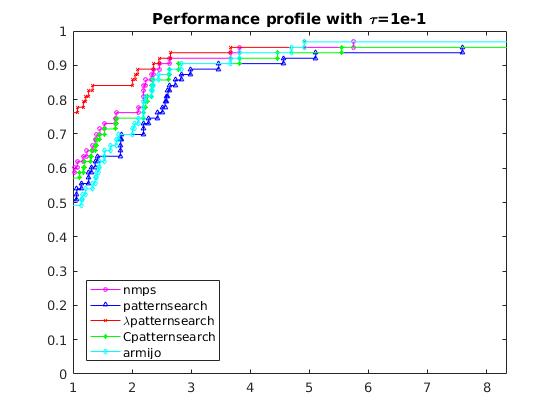}}
  \subfloat[]{
   \label{f:tol22}
    \includegraphics[width=0.5\textwidth]{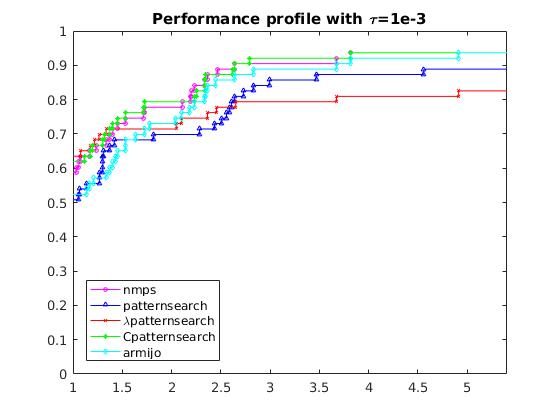}}
    \\    
    \subfloat[]{
    \label{f:tol33}
    \includegraphics[width=0.5\textwidth]{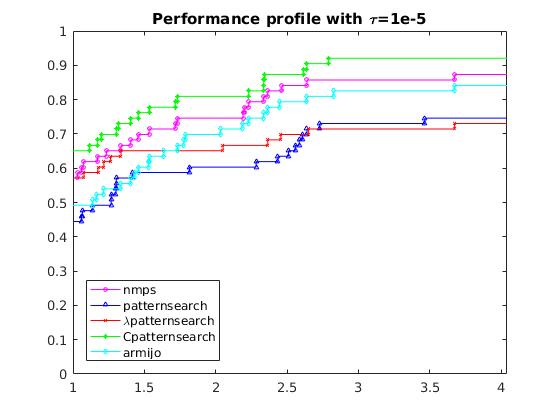}}
\caption{Performance profiles for the five strategies}

\label{f:performanceprofile63_2}
\end{figure}

From the analysis of the figures included in Figure \ref{f:performanceprofile63_2} we can obtain the following conclusions. First, in most cases the use of  a nonmonotone line search of the kind (\ref{nmc}), (\ref{C}) or (\ref{lambda}) turns into an advantage in the performance of the algorithms compared to the classical Armijo's rule (\ref{armijo}). In other words, a greater effort devoted to building $f_{\max}(x_k)$ or $C_k$ results in a decrease in the number of function evaluations carried out by the algorithm, which is one of the main goals in derivative-free methods. Second, although the strategies (\ref{nmc}) and (\ref{lambda}) have similar definitions, the above analysis shows, while  \texttt{$\lambda$patternsearch} reduces its performance as the level of accuracy increases, \texttt{nmps} remains stable for all values of  $\tau$. So, if we should choose between this two strategies, \texttt{nmps} would be  the most suitable. Third, the \texttt{Cpatternsearch} algorithm could be considered as the solver with the best performance because it is always above the other solvers as the level of accuracy increases. Our algorithm \texttt{nmps}, in its turn, seems to be a good competitor to \texttt{Cpatternsearch}, attaining a similar performance with respect to the latter in several cases. Also, \texttt{nmps} always obtains the second place regarding the probability to solve all the problems. At the end, to our surprise, we observe that the performance of \texttt{patternsearch} algorithm is in many cases below all remaining methods.

\

\section{Conclusions}

In this paper, we have proposed a new pattern search algorithm \texttt{nmps} to solve bound constrained optimization problems, which uses a nonmonotone line search strategy for accepting the new iterate. We have proved that, under mild assumptions, we can guarantee global convergence of our method to a KKT point. This result is strongly based on  the relationship between the step length and the stationarity measure (defined by Conn et al. in \cite{CGT00}), and it  was proved in Section~4.  Furthermore, we have performed several numerical experiments where we have compared the performance of our algorithm to other line search strategies that were implemented in  \texttt{patternsearch}, \texttt{$\lambda$patternsearch}, \texttt{Cpatternsearch} and \texttt{armijo} algorithms. The benchmark results were satisfactory, so we can conclude that the \texttt{nmps} algorithm is competitive, compared to the other solvers, as the numerical experiments reveal. We are currently working on an extension of the \texttt{nmps} algorithm to linearly constrained optimization problems, taking into account the directions generated by the linear constraints.

\

\section*{Disclosure statement}

No potential conflict of interest was reported by the authors.

\

\section*{Funding}

This work was partially supported by research project Consejo Nacional de Investigaciones Cient\'ificas y T\'ecnicas (CONICET) PIP 112--201101--00050, PUE 22920160100134CO and Secretar\'ia de Ciencia y Tecnolog\'ia (SECyT-UNC).

\
\bibliographystyle{plain}

\end{document}